\def\psl{\operatorname{\textup{PSL}}(2,\Cbb)}
\theoremstyle{definition}
\newtheorem{thm}{Theorem}[section]
\newtheorem{prop}[thm]{Proposition}
\newtheorem{defn}[thm]{Definition}
\newtheorem{exam}[thm]{Example}
\begin{document}

\begin{frontmatter}

\title{Partially abelian representations of knot groups}

\author{Yunhi Cho} 
\address{Department of Mathematics, University of Seoul, Seoul, Korea}
\fntext[y_cho]{The first author was supported by the 2014 sabbatical year research grant of the University of Seoul.}
\ead{yhcho@uos.ac.kr}

%

\author{Seokbeom Yoon}
\address{Department of Mathematical Sciences, Seoul National University, Seoul 08826,  Korea}
\fntext[s_yoon]{The second author was supported by Basic Science Research Program through the NRF of Korea funded by the Ministry of Education (2013H1A2A1033354).}
\ead{sbyoon15@snu.ac.kr}

\begin{abstract}
	A knot complement admits a pseudo-hyperbolic structure by solving Thurston's gluing equations for an octahedral decomposition. It is known that a solution to these equations can be described in terms of region variables, also called $w$-variables. In this paper, we consider the case when pinched octahedra appear as a boundary parabolic solution in the decomposition. A $w$-solution with pinched octahedra induces a solution for a new knot obtained by changing the crossing or inserting a tangle at the pinched place. We discuss this phenomenon with corresponding holonomy representations and give some examples including ones obtained from connected sum.
\end{abstract}
\begin{keyword}
	knot diagram change, boundary parabolic representation.
    \MSC[2010] 57M25 
\end{keyword}

\end{frontmatter}

\section{Introduction}\label{sec:introduction}

For a knot diagram $D$ of a knot $K$ in $S^3$, D.Thurston \cite{thurston_1999} introduced a way to decompose $M=S^3 \setminus (K \cup \{ \textrm{two points}\})$ into ideal octahedra by placing an octahedron at each crossing and then identifying their faces appropriately along the knot diagram. One can obtain an ideal triangulation $\Tcal_D$ of $M$ by dividing each octahedron into ideal tetrahedra. Then we can give a ``pseudo-hyperbolic structure" on $M$ through this ideal triangulation by solving Thurston's gluing equations for $\Tcal_D$ which require the product of cross-ratios (or shape parameters) around each edge of $\Tcal_{D}$ to be $1$. Since the cross-ratios determine the shapes of each ideal hyperbolic octahedron and vice versa, these hyperbolic octahedra, giving a pseudo-hyperbolic structure on $M$, will be called a solution. Even though the gluing equations only guarantee that the sum of dihedral angles around each edge is a multiple of $2\pi$, not $2\pi$, one still can consider a (pseudo-) developing map of $M$ with a holonomy representation as W.Thurston did in \cite{thurston_geometry_1977}, whenever a solution is given. 

An octahedral decomposition has been used by several authors very successfully in conjunction with the volume conjecture. Yokota \cite{yokota_potential_2002} used a 4-term triangulation $\Tcal_{4D}$ of $M$ motivated by the optimistic limit of the Kashaev invariant presenting the gluing equations as derivatives of a potential function. In a similar manner, Cho and Murakami \cite{CM_2013} suggested a 5-term triangulation $\Tcal_{5D}$ of $M$ applying the optimistic limit to the colored Jones polynomial formulation of the state sum of quantum invariant. They present the gluing equations for $\Tcal_{5D}$ in terms of region variables, also called $w$-variables, which are non zero complex valued variables assigned to each region of a diagram $D$. The 5-term triangulation $\Tcal_{5D}$ has a nice property that any non trivial boundary parabolic representation of a knot group can be derived from a solution to the gluing equations for $\Tcal_{5D}$ as a holonomy  \cite{cho_optimistic_2016}. On the other hand, the 4-term triangulation $\Tcal_{4D}$ does not have such property since the octahedron at a crossing in $\Tcal_{4D}$ can not be pinched, i.e., the top and bottom vertices of the octahedron can not coincide, while the octahedron in $\Tcal_{5D}$ can. 

	\[
	\vcenter{\hbox{
\begingroup%
  \makeatletter%
  \providecommand\color[2][]{%
    \errmessage{(Inkscape) Color is used for the text in Inkscape, but the package 'color.sty' is not loaded}%
    \renewcommand\color[2][]{}%
  }%
  \providecommand\transparent[1]{%
    \errmessage{(Inkscape) Transparency is used (non-zero) for the text in Inkscape, but the package 'transparent.sty' is not loaded}%
    \renewcommand\transparent[1]{}%
  }%
  \providecommand\rotatebox[2]{#2}%
  \ifx\svgwidth\undefined%
    \setlength{\unitlength}{283.46456693bp}%
    \ifx\svgscale\undefined%
      \relax%
    \else%
      \setlength{\unitlength}{\unitlength * \real{\svgscale}}%
    \fi%
  \else%
    \setlength{\unitlength}{\svgwidth}%
  \fi%
  \global\let\svgwidth\undefined%
  \global\let\svgscale\undefined%
  \makeatother%
  \begin{picture}(1,0.35)%
    \put(0,0){\includegraphics[width=\unitlength,page=1]{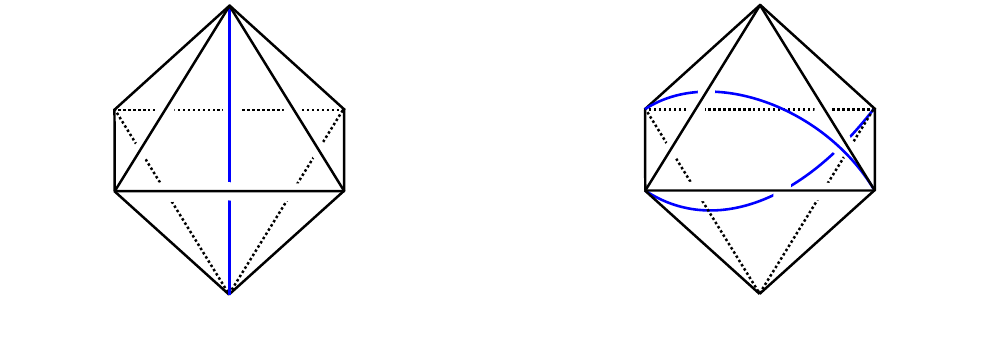}}%
    \put(0.0292494,0.00481801){\color[rgb]{0,0,0}\makebox(0,0)[lb]{\smash{(a) 4-term triangulation}}}%
    \put(0.57111609,0.00764023){\color[rgb]{0,0,0}\makebox(0,0)[lb]{\smash{(b) 5-term triangulation}}}%
  \end{picture}%
\endgroup%
}}
	\]
    
Each solution of the gluing equations gives rise to a holonomy representation of a knot group and among them only boundary parabolic ones will be considered in this paper.
We observed that some interesting phenomena arise when pinched octahedra appear in a solution. We first suggest a notion of R-related diagrams as follows. Let a solution to the gluing equations for $\Tcal_{5D}$ have pinched octahedra. Then it also satisfies the gluing equations for $\Tcal_{5D'}$ where $D'$ is a new diagram obtained from $D$ by changing a crossing at which a pinched octahedron is assigned (Theorem \ref{thm:main}). We say two such diagrams $D$ and $D'$, having a ``common $w$-solution", are R-related. Here `R' stands for `representation' meaning that both knots $K$ and $K'$, represented by $D$ and $D'$ respectively, have representations of knot groups with the same image group in $\psl$. These representations is called partially abelian representations where meaning of  ``partially abelian'' will be explained in the following section. We also show that whenever a pinched solution arises, we can replace the crossing, where the pinch occurs, by rational tangles with a relatively easy change of $w$-solutions (Theorem \ref{thm:main2}). This shows that we can construct lots of ``bigger" knots having the same representations and the complex volume as the one we started with. In the last section, we describe how we can find examples of R-related diagrams through the connected sum.
 \section*{Acknowledgment}
 We thank Hyuk Kim and Seonhwa Kim for helpful comments and suggestions.

\section{Region variables and pinched octahedra}\label{secprelim}
	\subsection{Region variables}	
	Let $D$ be a knot diagram of a knot $K$ with $N$ crossings. We denote the crossings of $D$ by $c_1,\cdots,c_N$ and the regions of $D$ by $r_1,\cdots,r_{N+2}$. Let $\Ocal_D$ be Thurston's octahedral decomposition of $M=S^3\setminus(K\cup\{ \textrm{two points}\})$ with respect to $D$. We denote the ideal octahedron of $\Ocal_D$ at a crossing $c_k$ by $o_k$. We divide each octahedron $o_k$ into five tetrahedra by adding two edges as in Figure \ref{fig:octa_region} and call the resulting ideal triangulation of $M$ \emph{the five-term triangulation} $\Tcal_{5D}$. Considering the octahedra $o_1,\cdots,o_N$ to be hyperbolic, Cho and Murakami \cite{CM_2013} suggested region variables as a way to describe shape of the hyperbolic octahedra. A region variable $w_j$ is a non zero complex valued variable  assigned to a region $r_j$ of $D$ where the ratio of adjacent region variables around $c_k$ becomes the shape parameter of a tetrahedron in $o_k$ as in Figure~\ref{fig:octa_region}.
	\begin{figure}[H]
		\centering
		\scalebox{1}{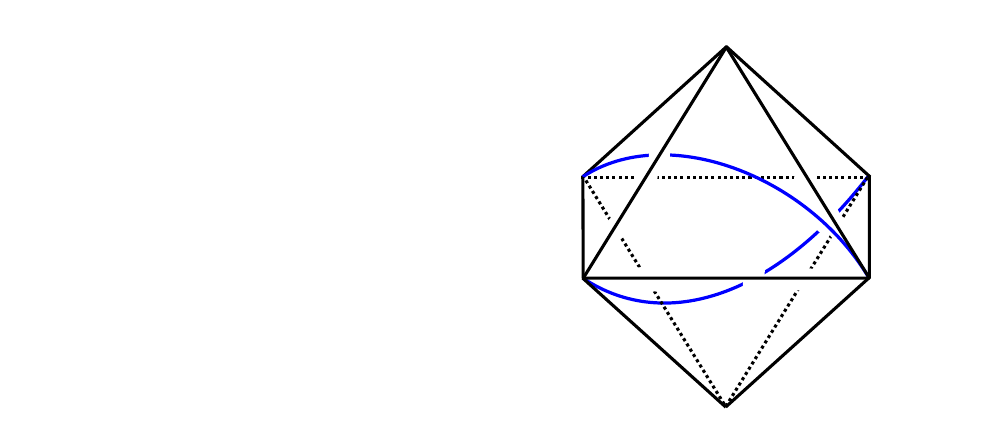}
		\caption{The 5-term triangulation and region variables.}
		\label{fig:octa_region}
	\end{figure}
	It turns out that the hyperbolic ideal octahedra $o_1,\cdots,o_N$ whose shapes are determined by $w$-variables as above automatically satisfy the gluing equation for every edge of $\Tcal_{5D}$ except for the edges corresponding to the regions of $D$; see Section 4.3 of \cite{KKY} for details. 
	The gluing equations for these edges are
	$$\prod_{\textrm{corner crossing } c_k \textrm{ of }r_j}\tau_{k,j}=1$$ for each region $r_j$ of $D$ where the product is over all corner crossings of a region $r_j$ and $\tau_{k,j}$ is the cross-ratio at the side edge of $o_k$ corresponding to $r_j$. (See Figure~\ref{fig:octa_region}.) Since both the ratios of $w$-variables and $\tau$'s are cross-ratios at the edges of $o_k$, from the general relation of these cross-ratios of an octahedron, one can compute $\tau_{k,*}$'s in terms of region variables as follows :
	\begin{equation}\label{eqn:tau_w_a}
		\begin{array}{cc}
		\left\{
		\begin{array}{rl}
		\tau_{k,a} =& \dfrac{ w_b w_d -w_a w_c}{( w_a- w_b) (w_a - w_d)} \\[13pt]
		\tau_{k,b} =& \dfrac{(w_b-w_c) (w_b-w_a)}{w_a w_c-w_b w_d} \\[13pt]
		\tau_{k,c} =& \dfrac{ w_b w_d -w_a w_c}{( w_c-  w_d) (w_c -  w_b)} \\[13pt]
		\tau_{k,d} =& \dfrac{(w_d-w_a) (w_d-w_c)}{w_a w_c-w_b w_d}
		\end{array}
		\right. & \textrm{ for a crossing $c_k$ as in Figure \ref{fig:octa_region}}
		\end{array}
	\end{equation} 
	\begin{defn} A \emph{region variable} $w_j$ is a non-zero complex valued variable assigned to a region $r_j$ for $1 \leq j \leq N+2$. A $(N+2)$-tuple of region variables $w=(w_1,\cdots,w_{N+2})$ is a \emph{boundary parabolic solution} (to Thurston's gluing equations for $\Tcal_{5D}$) if it satisfies \\
	(a) (gluing equation) 
	\begin{equation}\label{eqn:gluing}
	\prod_{\textrm{corner crossing } c_k \textrm{ of }r_j}\tau_{k,j}=1
	\end{equation} for every region $r_j$ of $D$ \\
	(b) (non-degeneracy condition) $w_a w_c-w_b w_d \neq 0$ at each crossing as in Figure~\ref{fig:octa_region} and every pair of adjacent region variables is distinct.
	\end{defn}
	 Here the non-degeneracy condition (b) holds if and only if every ideal hyperbolic tetrahedron of $\Tcal_{5D}$ is non-degenerate. (We refer Section 4.3 of \cite{KKY} for the details in this subsection.)
\subsection{Pinched octahedra}
	Let region variables $w$ be a boundary parabolic solution and let $o_k$ be the corresponding hyperbolic ideal octahedron of $\Ocal_D$ at a crossing $c_k$ whose cross-ratios are determined by region variables $w$. One can construct a pseudo-developing map of $M=S^3\setminus(K \cup \{ \textrm{two points}\})$ by placing the octahedra $o_1,\cdots,o_N$ consecutively in $\Hbb^3$ in the fashion arranged in the universal cover $\widetilde{M}$. Then one can  obtain a holonomy representation $\rho : \pi_1(M)\rightarrow \psl$ of the knot group by the rigidity of a developing map. Thus the boundary parabolic solution $w$ gives a boundary parabolic holonomy representation.
	
	In \cite{KKY}, they observed that an octahedron $o_k$ may be \emph{pinched}, i.e., the top and bottom vertices of $o_k$ coincide. 
	
 	\begin{prop}\label{prop:pinch}	Let  $m_k$ and $\widehat{m}_k$ be Wirtinger generators winding the over-arc and the incoming under-arc of $c_k$, respectively. Then the following are equivalent.\\
	 	(a) The hyperbolic octahedron $o_k$ is pinched.\\
		(b) $w_a-w_b+w_c-w_d=0$ for Figure \ref{fig:octa_region}. \\
		(c) $\tau_{k,j}$ = 1 for some region $r_j$ adjacent to $c_k$.\\
		(d) $\tau_{k,j}$ = 1 for every region $r_j$ adjacent to $c_k$.\\
		(e) $\rho(m_k)$ and $\rho(\widehat{m}_k)$ commute. 
	\end{prop}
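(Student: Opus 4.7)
\emph{Proof plan.} I would handle the five equivalences in three stages.

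First, for (b) $\Leftrightarrow$ (c) $\Leftrightarrow$ (d), a direct algebraic computation using \eqref{eqn:tau_w_a} should suffice. Substituting $w_c = w_b + w_d - w_a$ into the numerator $w_bw_d-w_aw_c$ of $\tau_{k,a}$ gives exactly $(w_a - w_b)(w_a - w_d)$, so $\tau_{k,a} = 1$; the same elementary manipulation (symmetric under $a \leftrightarrow c$ and $b \leftrightarrow d$) will give $\tau_{k,j} = 1$ for the remaining three indices. Conversely, writing $\tau_{k,a} = 1$ as an equation, clearing denominators, cancelling the common term $w_bw_d$ and dividing by the nonzero factor $w_a$ recovers (b). Hence any single $\tau_{k,j} = 1$ already forces (b), and (b) in turn forces all four to equal $1$.

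Second, for (a) $\Leftrightarrow$ (b) I would invoke the parameterization of the octahedron $o_k$ by region variables developed in \cite[Section 4.3]{KKY}: the four equatorial vertices of $o_k$ are determined (up to a M\"obius transformation) by $w_a, w_b, w_c, w_d$, while the positions of the top and bottom vertices are controlled by the cross-ratios $w_a/w_b$, $w_c/w_d$, $w_c/w_b$, $w_a/w_d$ at the horizontal edges. A direct computation in this model should identify the top--bottom coincidence with the polynomial relation $w_a - w_b + w_c - w_d = 0$. Alternatively, one could derive this from the preceding step by observing that each of the four side tetrahedra of $o_k$ shares the vertical edge through the corresponding equatorial vertex, so forcing all four of the associated side cross-ratios to equal $1$ geometrically collapses the top and bottom vertices together.

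Third, for (a) $\Leftrightarrow$ (e) I would use the developing map. Since $\rho$ is boundary parabolic, both $\rho(m_k)$ and $\rho(\widehat{m}_k)$ are parabolic elements of $\psl$, and two nontrivial parabolic elements commute if and only if they share the same fixed point on $\partial\Hbb^3$. In the octahedral decomposition, the Wirtinger generator $m_k$ winds the over-arc at $c_k$ and its lift in the developed picture fixes the top vertex of $o_k$; analogously $\widehat{m}_k$ lifts to a parabolic element fixing the bottom vertex. Thus $\rho(m_k)$ and $\rho(\widehat{m}_k)$ commute precisely when these two vertices coincide, i.e., $o_k$ is pinched. The main obstacle is the geometric step: one must carefully invoke (or reconstruct) from \cite{KKY} the explicit correspondence between $w$-variables and vertices of $o_k$, both to pin down (a) $\Leftrightarrow$ (b) and to verify in (e) that each Wirtinger generator fixes the vertex it should under the developing map; the algebraic step and the parabolic-commutation step are essentially routine once this parameterization is in hand.
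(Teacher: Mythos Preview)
Your proposal is correct and follows essentially the same route as the paper: the algebraic equivalence of (b), (c), (d) via the explicit formulas \eqref{eqn:tau_w_a}, the geometric identification of the pinching condition with $\tau_{k,j}=1$ by appealing to the octahedral parameterization in \cite{KKY}, and the parabolic-commutation argument for (e) via the developing map. The only cosmetic difference is that the paper bridges (a) to the algebraic conditions through (c)/(d) (citing Propositions 4.13--4.14 of \cite{KKY} for $\tau_{k,j}=1 \Leftrightarrow$ top $=$ bottom) rather than through (b) directly, and it cites Remark 5.12 of \cite{KKY} for the fact that $\rho(m_k)$ and $\rho(\widehat{m}_k)$ fix the top and bottom vertices respectively---exactly the two ``obstacles'' you flagged.
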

	\begin{proof} One can easily check that conditions (b),(c), and (d) are equivalent to each others using equation (\ref{eqn:tau_w_a}). Moreover, a simple cross-ratio computation gives that $\tau_{k,j}=1$ if and only if the top and bottom vertices of the octahedron $o_k$ coincide. (See Propositions 4.13 and 4.14 in \cite{KKY}.) 
	\begin{figure}[H]
		\centering
		\scalebox{0.91}{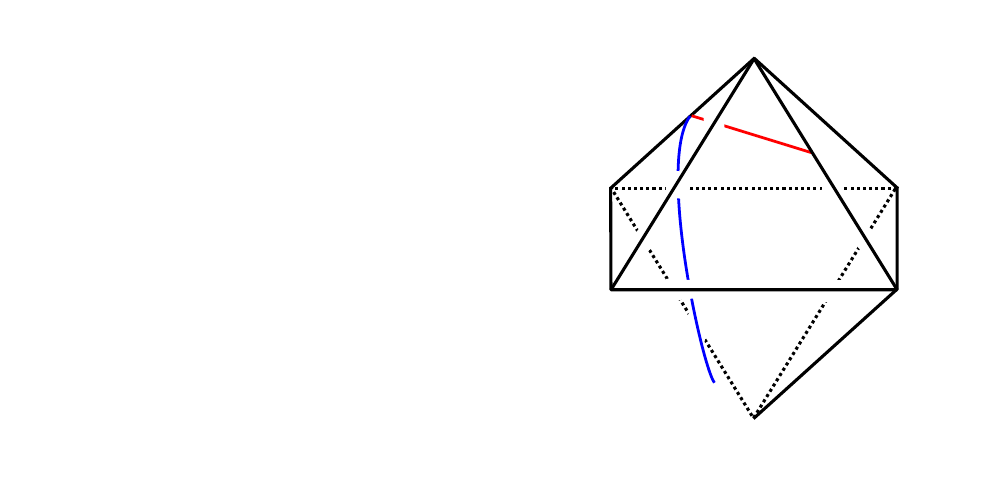}
		\caption{Wirtinger generators around a crossing $c_k$.}
		\label{fig:wirtinger}
	\end{figure}
	For condition (e) let us consider the Wirtinger generators $m_k$ and $\widehat{m}_k$ as in Figure \ref{fig:wirtinger}. As $m_k$ and $\widehat{m}_k$ wind the top and bottom vertices respectively as in Figure \ref{fig:wirtinger}, one can see that $\rho(m_k)$ and $\rho(\widehat{m}_k)$ fix the top and bottom vertices of a developing image of $o_k$, respectively. (See Remark 5.12 of \cite{KKY} for the details.) Since both $\rho(m_k)$ and $\rho(\widehat{m}_k)$ are parabolic elements, $\rho(m_k)$ and $\rho(\widehat{m}_k)$ commute if and only if the top and bottom vertices coincide, i.e., $o_k$ is pinched.
	\end{proof}
	
	We call the holonomy representation associated to a solution with pinched octahedra, or simply a pinched solution, a \emph{partially abelian representation} with respect to the diagram. We stress that the notion of partially abelian representations depends on the diagram. One can easily check that if every octahedron is pinched, then the solution gives an abelian representation. We also say ``a solution is pinched at a crossing $c_k$" to refer ``the  octahedron $o_k$ is pinched". Note that condition (e) is also equivalent to (e$'$) the $\rho$-images of the Wirtinger generators around $c_k$ commute.
	\begin{prop} \label{prop:ngon} Suppose that a region of $D$ has $n$ corner crossings. If a solution $w$ is pinched at $n-1$ octahedra among them, then it is also pinched at the last crossing.
	\end{prop}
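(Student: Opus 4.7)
The plan is to combine the gluing equation for the region in question with the equivalences already established in Proposition \ref{prop:pinch}. Let $r_j$ be the region with $n$ corner crossings, and label those crossings $c_{k_1},\dots,c_{k_n}$, where we assume the solution is pinched at $c_{k_1},\dots,c_{k_{n-1}}$ and we want to show it is also pinched at $c_{k_n}$.

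First I would invoke Proposition \ref{prop:pinch}, specifically the implication (a) $\Rightarrow$ (d): since the octahedron $o_{k_i}$ is pinched for $i=1,\dots,n-1$, we have $\tau_{k_i,j}=1$ for each such $i$ (note that $r_j$ is adjacent to $c_{k_i}$ because $c_{k_i}$ is a corner crossing of $r_j$). Next, I would apply the gluing equation \eqref{eqn:gluing} at $r_j$, which reads
\[
\prod_{i=1}^{n} \tau_{k_i,j} = 1.
\]
Substituting the $n-1$ known factors of $1$ forces $\tau_{k_n,j}=1$. Finally, I would apply the implication (c) $\Rightarrow$ (a) of Proposition \ref{prop:pinch} to conclude that $o_{k_n}$ is pinched.

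There is essentially no obstacle here; the statement is a direct bookkeeping consequence of the gluing equation combined with the pinched-octahedron criteria of Proposition \ref{prop:pinch}. The only thing to be mildly careful about is that a crossing could in principle be a corner of $r_j$ more than once (if $r_j$ is not simply adjacent to $c_{k_i}$ by a single corner), but the same argument still works: each occurrence of a pinched crossing contributes a factor of $\tau_{k_i,j}=1$ to the product, so the remaining factor at $c_{k_n}$ must again equal $1$, and the conclusion follows as before.
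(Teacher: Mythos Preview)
Your argument is correct and matches the paper's own proof, which likewise notes that the claim follows directly from condition (d) of Proposition~\ref{prop:pinch} together with the gluing equation~(\ref{eqn:gluing}) for the region. The only difference is presentation: the paper states this in one line (and mentions Proposition~\ref{prop:pinch}(e) as an alternative), while you spell out the implications (a)$\Rightarrow$(d), the product relation, and (c)$\Rightarrow$(a) explicitly.
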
 
	\begin{proof} The proof directly follows from condition (d) of Proposition~\ref{prop:pinch} and the gluing equation (\ref{eqn:gluing}) for the region. (Alternatively, one may use Proposition~\ref{prop:pinch}(e).)
	\end{proof}
	
	\section{$R$-related diagrams}
	
	\subsection{Crossing change and diagram change}
	In this section, we propose a notion of R-relatedness of the knot diagrams by the following property : If two diagrams $D$ and $D'$ are R-related, then the knots $K$ and $K'$, given by $D$ and $D'$ respectively, have boundary parabolic representations with the same image group in $\psl$. To exclude the trivial case we assume that representations in this section are not abelian, equivalently solutions in this section are not pinched at every crossing.
	
	\begin{thm}\label{thm:main} Let region variables $w$ be a boundary parabolic solution for a diagram $D$. Suppose that the solution $w$ is pinched at crossings $\{c_k\; |\; k \in J\}$ for some index set $J$. Then the solution $w$ is also a boundary parabolic solution for a diagram $D^J$, which is obtained from $D$ by changing the crossings $\{c_k\; |\; k \in J\}$.
    \end{thm}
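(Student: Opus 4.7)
The plan is to verify directly, using (\ref{eqn:tau_w_a}) and Proposition~\ref{prop:pinch}, that $w$ continues to satisfy the defining conditions of a boundary parabolic solution after the crossings in $J$ are flipped. The crucial structural observation is that a crossing change is a local diagram move that leaves the shadow of $D$ untouched: $D$ and $D^J$ share the same crossings $c_1,\dots,c_N$, the same regions $r_1,\dots,r_{N+2}$, and the same region-to-crossing incidence data, so the tuple $w$ is unambiguously a candidate solution for $\Tcal_{5D^J}$. The only local effect at $c_k$ with $k\in J$ is that flipping the crossing swaps the ``over-pair'' with the ``under-pair'' of opposite regions, which in the labeling of Figure~\ref{fig:octa_region} corresponds to a cyclic shift $(a,b,c,d)\mapsto(b,c,d,a)$. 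Plugging this shift into (\ref{eqn:tau_w_a}) yields new cross-ratios $\tau_{k,j}'$ at the changed crossings, while $\tau_{k,j}'=\tau_{k,j}$ for $k\notin J$.

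For the gluing equations, observe that by Proposition~\ref{prop:pinch}(b) the pinch condition $w_a-w_b+w_c-w_d=0$ is invariant under the cyclic shift; hence every $c_k$ with $k\in J$ remains pinched as a crossing of $D^J$, and Proposition~\ref{prop:pinch}(d) applied to $D^J$ gives $\tau_{k,j}'=1$ for all regions $r_j$ adjacent to such $c_k$. Fixing a region $r_j$ with corner-crossing set $K_j$, the proposed new gluing equation therefore reads
$$\prod_{k\in K_j}\tau_{k,j}'\;=\;\prod_{k\in K_j\setminus J}\tau_{k,j}\,\cdot\prod_{k\in K_j\cap J}1\;=\;\prod_{k\in K_j\setminus J}\tau_{k,j}\,\cdot\prod_{k\in K_j\cap J}\tau_{k,j}\;=\;\prod_{k\in K_j}\tau_{k,j}\;=\;1,$$
where the middle equality uses $\tau_{k,j}=1$ for $k\in J$ (via Proposition~\ref{prop:pinch}(d) applied to $D$) and the last is (\ref{eqn:gluing}) for $D$.

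Non-degeneracy is then essentially automatic: at each $c_k$ with $k\in J$ the quantity $w_aw_c-w_bw_d$ changes only by a sign under the cyclic relabeling and so remains nonzero, while adjacency of region variables is a function of the unchanged shadow. The main obstacle is purely bookkeeping---one must pin down which cyclic permutation of $(a,b,c,d)$ corresponds to a crossing change and verify the behavior of (\ref{eqn:tau_w_a}) under it---but since the argument is invoked only at pinched crossings, where every relevant $\tau$ equals $1$ before and after, the conclusion is insensitive to the precise choice of permutation, and no substantive computation beyond the pinch condition $w_a-w_b+w_c-w_d=0$ is required.
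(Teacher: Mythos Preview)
Your argument is correct and follows the same route as the paper's own proof: compare the $\tau$-values for $D$ and $D^J$, note they coincide at unchanged crossings, and invoke Proposition~\ref{prop:pinch} to see that at pinched crossings all $\tau$'s equal $1$ both before and after the flip, whence every region's gluing product is preserved. Your write-up is slightly more explicit about the cyclic relabeling and the non-degeneracy check (which the paper leaves implicit), but the underlying idea is identical.
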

	\begin{proof} Let $\tau_{*,*}$(resp., $\tau^J_{*,*}$) be the $\tau$-values in equation (\ref{eqn:tau_w_a}) for the region variables $w$ with respect to the diagram $D$(resp., $D^J$). It is clear from equation (\ref{eqn:tau_w_a}) that $\tau_{k,*}=\tau^J_{k,*}$ for $k \notin J$. Also conditions (a) and (c) of Proposition \ref{prop:pinch} tell us that $\tau_{k,*}=\tau^J_{k,*}=1$ for $k \in J$.  Therefore the solution $w$ also satisfies the gluing equations for every region of $D^J$.
	\end{proof}
	 We say such two diagrams $D$ and $D^J$ in Theorem \ref{thm:main} are \emph{R-related}.  Let $K$(resp., $K^J$) be a knot represented by $D$(resp., $D^J$). The solution $w$ induces a representation for both the knot groups of $K$ and $K^J$, and we denote them by $\rho$ and $\rho^J$, respectively. One can describe $\rho^J$ by $\rho$ as follows. Let $m_i,m_j,$ and $m_l$ (resp., $m^J_i,m^J_j,$ and $m^J_l$) be Wirtinger generators around a crossing $c_k$($k\in J$) for the diagram $D$ (resp., $D^J$) as in Figure \ref{fig:D_J}. Then $\rho^J(m^J_i)=\rho(m_i)=\rho(m_l)$, $\rho^J(m^J_j)=\rho(m_j)$, and $\rho^J(m^J_l)=\rho(m_j)$. Note that we have $\rho(m_i)=\rho(m_l)$ and $\rho^J(m^J_j)=\rho^J(m^J_l)$.
	
	\begin{figure}[H]
		\centering
		\scalebox{1}{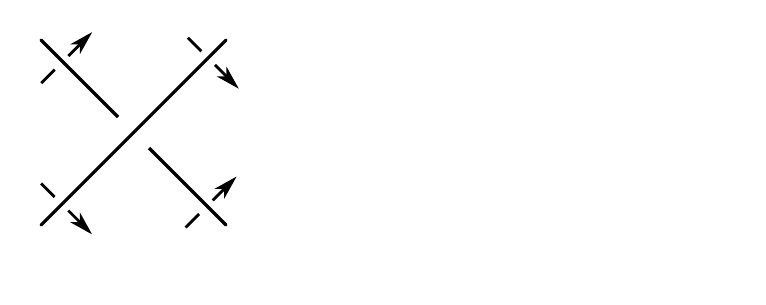}
		\caption{A crossing-change and Wirtinger generators.}
		\label{fig:D_J}
	\end{figure}
	Then it is clear that the image group of $\rho^J$ is the same as that of $\rho$. In particular, the complex volumes of $K$ and $K^J$ with respect to these representations are the same. 
	\begin{exam}[The knot $8_5$] In Section 7.2 of \cite{KKY}, they presented a pinched solution $w=(w_1,\cdots,w_{10})$ for a diagram $D$ of the knot $8_5$ as in Figure \ref{fig:8_15}(a), 	and argued that there is no others : 	
		\begin{equation*}
		\begin{array}{ccl}
		(w_1,\cdots,w_{10}) &=&	\left(-\dfrac{1}{p+q-pqr}+\dfrac{1}{p}+\dfrac{1}{q},\;-\dfrac{1}{p+q-pqr}+\dfrac{1}{p}+r\right.\\[11pt] 
		& &\quad \quad ,\; -\dfrac{1}{-p qr+p+q}+\dfrac{1}{p}+\dfrac{2}{q}-r,\; -\dfrac{1}{p+q-pqr}+\dfrac{1}{p}+\dfrac{1}{q}\\[11pt]
		& &\quad \quad ,\;-\dfrac{1}{p+q-pqr}+\dfrac{1}{q}+r,\; \dfrac{1}{p}+\dfrac{1}{q},\; r,\; \dfrac{1}{p}+\dfrac{1}{q}\\[11pt]
		& &\quad \quad \left.,\; -\dfrac{1}{p+q-pqr}+\dfrac{1}{q}+r,\; -\dfrac{1}{p+q-pqr}+\dfrac{1}{p}+r\right).
		\end{array}
		\end{equation*}
		One can check that the solution $w$ is pinched at the crossings $c_1$ and $c_2$ using condition (b) of Proposition \ref{prop:pinch}. Then by Theorem~\ref{thm:main} it also satisfies the gluing equations for $D^{\{1\}}$ and $D^{\{1,2\}}$, which are diagrams of the granny knot and the $T(3,4)$ torus knot, respectively. In particular, the complex volume($0+3.28987i$) of the knot $8_5$ with respect to the solution $w$ is the same to that of the granny knot which is twice the complex volume($0+1.64493i$) of the irreducible representation for the trefoil knot~\cite{cho_connceted_2015}. (Note that the trefoil knot has a unique irreducible boundary parabolic representation.)
        \begin{figure}[H]
			\centering
			\scalebox{1}{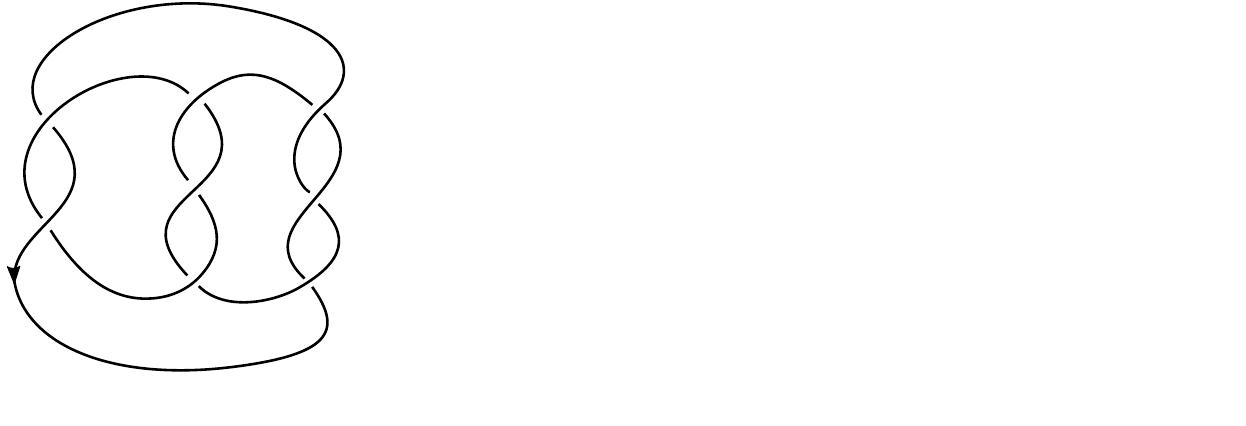}
			\caption{The $8_{5}$ knot diagram and $R$-related diagrams.}
			\label{fig:8_15}
		\end{figure}
	\end{exam}
	\begin{exam}[The knot $8_{18}$] Let us consider a diagram $D$ of the $8_{18}$ knot and assign region variables $w_1,\cdots,w_{10}$ to $D$ as in Figure \ref{fig:8_18}. 
		\begin{figure}[H]
			\centering
			\scalebox{1}{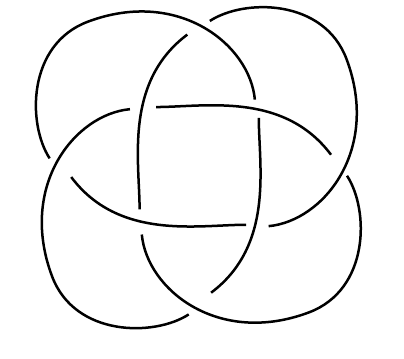}
			\caption{The $8_{18}$ knot diagram.}
			\label{fig:8_18}
		\end{figure}
		We first investigate the possibilities of crossings to be pinched. Suppose there is a solution $w$ pinched at the crossing $c_6$. By Theorem \ref{thm:main}, $w$ is a solution for $D^{\{6\}}$, which is a digram of the trefoil knot with a kink at the crossing $c_8$. Since Wirtinger generators around $c_8$ commute, the solution $w$ should be also pinched at $c_8$ by Proposition \ref{prop:pinch}(e). Under this condition one can compute a representation $\rho$ using the Wirtinger presentation. (We use Mathematica for the actual computation.) Also one can obtain the solution $w$ from the representation $\rho$ through \cite{cho_optimistic_2016} :
	\begin{equation*}
	\begin{array}{rl}
	w=&\big(p-q r+q,\ p-q r+q,\ p r+p-q r,\ p r+p-q r,\ p r-q r+q, \\[2pt]
	& p-q r+2 q,\ p r+p+q,\ 2 p r+p-q r,\ p r+q,p-q r\big).
	\end{array}
	\end{equation*} 
	We note that $w$ is pinched only at the crossings $c_6$ and $c_8$.
	Using the similar argument, we also obtain a solution $w'$ which is pinched at the crossings $c_2$ and $c_4$ :
	\begin{equation*}
	\begin{array}{rl}
	w'=&\big(p-q r+q,\ p r-2 q r+q,\ 4 p r-p-3 q r+q,\ p r+p-q r,\ p r-q r+q,\\[2pt]
	&p r-q r+q,\ 3 p r-p-2 q r+q,\ 3 p r-p-2 q r+q,\ 2 p r-p-q r+q,\ p-q r\big).
	\end{array}
	\end{equation*}
	One can check through Proposition \ref{prop:ngon} that other possibilities result in a solution pinched at every crossing or a solution in symmetry with $w$ or $w'$. Hence $w$ and $w'$ are the only pinched solutions for the diagram $D$.

	 Since both $D^{\{6,8\}}$ and $D^{\{2,4\}}$ represent the trefoil knot, we conclude that the knot $8_{18}$ has a boundary parabolic representation whose image is the modular group $\mathrm{PSL}(2,\Zbb)$, which is the image of the irreducible representation of the trefoil knot. 
	\end{exam}
	\begin{thm}\label{thm:main2} Let $w$ be a boundary parabolic solution for a diagram $D$. Suppose that the solution $w$ is pinched at a crossing $c_k$. Let $D'$ be a diagram obtained from $D$ by replacing $c_k$ by the standard diagram of a rational tangle $[2 n_1,\cdots,2 n_{k-1},2 n_{k}+1]$, $n_i \in \Zbb$. Then there is a boundary parabolic solution $w'$ for a diagram $D'$ such that $w'$ is pinched at every crossing in the tangle and coincide with $w$ on the outside of the tangle.
	\end{thm}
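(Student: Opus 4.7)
The plan is to explicitly assign region variables inside the inserted rational tangle so that every new crossing is pinched, and then to verify that the resulting $w'$ is a boundary parabolic solution for $D'$. The crucial simplification is Proposition~\ref{prop:pinch}(d): once every tangle crossing is pinched, $\tau_{k',j}=1$ at every corner of every region adjacent to a tangle crossing $c_{k'}$. Consequently, the gluing equation at any region of $D'$ lying entirely inside the tangle reduces to $1=1$, and the gluing equation at any region of $D'$ which extends a region $r_j$ of $D$ reduces (after cancelling the trivial factors from tangle corners) to the original gluing equation for $r_j$ in $D$, which already holds because $w$ is a solution and $\tau_{k,j}=1$ at $c_k$.

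The construction of the interior variables would proceed twist-region by twist-region. The pinch condition $w_a-w_b+w_c-w_d=0$ at a crossing is a single linear relation, and in a twist region of $n$ stacked crossings with fixed ``side'' regions of values $u$ and $v$, the sequence of variables $w_0,w_1,\dots,w_n$ between consecutive crossings is forced by the pinch conditions to satisfy the two-term recursion $w_{i-1}+w_i=u+v$. This has the alternating solution $w_{2m}=w_0$ and $w_{2m+1}=u+v-w_0$, so an even-length twist $2n_i$ preserves its endpoint value while an odd-length twist $2n_k+1$ flips it to its ``pinch-conjugate''. The continued fraction $[2n_1,\dots,2n_{k-1},2n_k+1]$ is realized by a concatenation of such twists alternating between horizontal and vertical orientations; starting from the four boundary values inherited from $w$ at $c_k$, I would march inward, use each even-length twist to pass boundary data unchanged to the next (orthogonal) twist region, and close with the odd-length twist, whose output relation $w_a+w_c=w_b+w_d$ reproduces exactly the pinch condition assumed at $c_k$.

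The main obstacle is the non-degeneracy condition for $w'$: one must show that no pair of adjacent interior variables coincides and that $w_a w_c - w_b w_d \neq 0$ at each new crossing. The latter follows from the former via the identity $w_a w_c - w_b w_d = -(w_a-w_b)(w_a-w_d)$, which one obtains by using the pinch equation to eliminate $w_c$. For the former, the alternating pattern above describes the set of interior values in closed form, and adjacent distinctness reduces to non-degeneracy of $w$ at $c_k$ and at the crossings neighboring $c_k$ in $D$. With non-degeneracy in place, together with the gluing-equation observation above and the fact that every new crossing is pinched by design, $w'$ qualifies as a boundary parabolic solution for $D'$ with the claimed properties.
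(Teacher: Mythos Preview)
Your approach is essentially the same as the paper's: both construct $w'$ by assigning to the new interior regions alternating values drawn from $\{w_a,w_b,w_c,w_d\}$ (your recursion $w_{i-1}+w_i=u+v$ is exactly what forces the paper's alternating pattern $w_a,w_c,w_a,\ldots$ or $w_b,w_d,w_b,\ldots$ in each twist block), and both then invoke Proposition~\ref{prop:pinch}(d) so that every $\tau$-factor coming from a tangle crossing is $1$, reducing all gluing equations either to $1=1$ or to the corresponding equation already satisfied by $w$ in $D$. The paper organizes this as an induction on the length $k$ of the continued fraction, while you phrase it as marching through the twist blocks, but the content is identical.

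Your discussion of non-degeneracy in fact goes beyond the paper, which does not verify it explicitly. Your identity $w_aw_c-w_bw_d=-(w_a-w_b)(w_a-w_d)$ under the pinch relation is correct, and since in each twist block opposite regions carry values from $\{w_a,w_c\}$ while the sides carry $\{w_b,w_d\}$ (or vice versa), adjacent pairs inside the tangle are always of mixed type and hence distinct by non-degeneracy of $w$ at $c_k$ alone; the reference to ``crossings neighboring $c_k$'' is unnecessary.
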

	\begin{proof} Let us denote the region variables around the crossing $c_k$ by $w_a,w_b,w_c,$ and $w_d$ as in Figure \ref{fig:rational_tangle}. Then we have $w_a-w_b+w_c-w_d=0$ from Proposition \ref{prop:pinch}(a). We will prove the theorem by induction on $k$. For the case $k=1$, we replace the crossing $c_k$ by a rational tangle $[2n_1+1]$. Compare with the diagram $D$, there are even number of new regions of $D'$. We define region variable $w'$ by assigning $w_c$ and $w_a$ alternately to these new regions and leave $w$ for other  unchanged regions. See Figure \ref{fig:rational_tangle}. It is clear that $w'$ is pinched at every crossing in the tangle, since we have $w_a-w_b+w_c-w_d=0$ at each crossing. Also one can check that $w'$ satisfies the gluing equation for every region of $D'$ by Proposition \ref{prop:pinch}(d).
		
	For $k>1$, the number of regions of $D'$ increases by an even number as $k$ increases by $1$. We define $w'$ by assigning $w_b$ and $w_d$(resp., $w_c$ and $w_a$) alternately to the newly created regions if $k$ increase to an even(resp., odd) number as in Figure \ref{fig:rational_tangle}. Then one can check that $w'$ is a desired solution.
	\begin{figure}[H]
		\centering
		\scalebox{1}{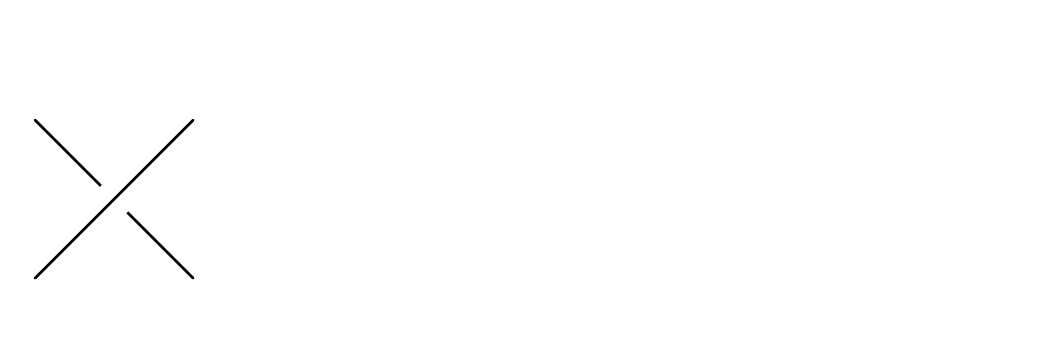}
		\caption{Rational tangles $[3]$ and $[2,-2,3]$.}
		\label{fig:rational_tangle}
	\end{figure}
	\end{proof}
	\subsection{R-related diagrams from connected sum}
	Now we will give some examples of R-related diagrams using connected sum. 
	Let $D$(resp., $D'$) be a diagram of a knot $K$(resp., $K'$) and let $\rho$(resp., $\rho'$) be a boundary parabolic representation of the knot group of $K$(resp., $K'$). One can construct $1$-parameter family of boundary parabolic representations for $K \# K'$ as follows. Let $A$ and $A'$ be arcs of $D$ and $D'$ respectively which are to be cut for the connected sum $D \# D'$. We may assume that both $\rho(m_A)$ and $\rho'(m_{A'})$ are $\left(\begin{array}{cc} 1 & 1 \\ 0& 1 \end{array}\right)$ by conjugating $\rho$ and $\rho'$ appropriately where $m_A$(resp., $m_{A'}$) is the Wirtinger generator winding the arc $A$(resp., $A'$). Then for any $r \in \Cbb$ we define a representation $\rho \#_r \rho'$ for $D \# D'$ by assigning $\rho$ to the Wirtinger generators winding arcs of $D$ and assigning $\left(\begin{array}{cc} 1 & r \\ 0& 1 \end{array}\right)\rho'\left(\begin{array}{cc} 1 & -r \\ 0& 1 \end{array}\right)$ to the Wirtinger generators of $D'$. (This construction is also described in \cite{cho_connceted_2015}.)
	
	Now choose an arc $B$ of $D$ and an arc $B'$ of $D'$ such that they are parts of a common region in $D \# D'$. Suppose both $\rho(m_B)$ and $\rho'(m_{B'})$ do not fix $\infty$ where $m_B$(resp., $m_{B'}$) is the Wirtinger generator winding the arc $B$(resp., $B'$). Let us choose $r:= \textrm{Fix}(\rho(m_B))-\textrm{Fix}(\rho'(m_{B'}))$. Then the $\rho\#_r\rho'$ image of $m_B$ and $m_{B'}$ commute since they are parabolic elements having a common fixed point. Therefore, applying Reidemeister second move for the arcs $B$ and $B'$ in the common region, we obtain two pinched crossings, satisfying  Proposition~\ref{prop:pinch}(e).
	\begin{exam}[The granny knot] Let $D$ and $D'$ be diagrams of the trefoil knot, and $\rho$ and $\rho'$ be representations described as in Figure \ref{fig:granny}(a). Also we choose the arcs $A,A',B,$ and $B'$ as in Figure \ref{fig:granny}(a). 
	\begin{figure}[H]
		\centering
		\scalebox{0.8}{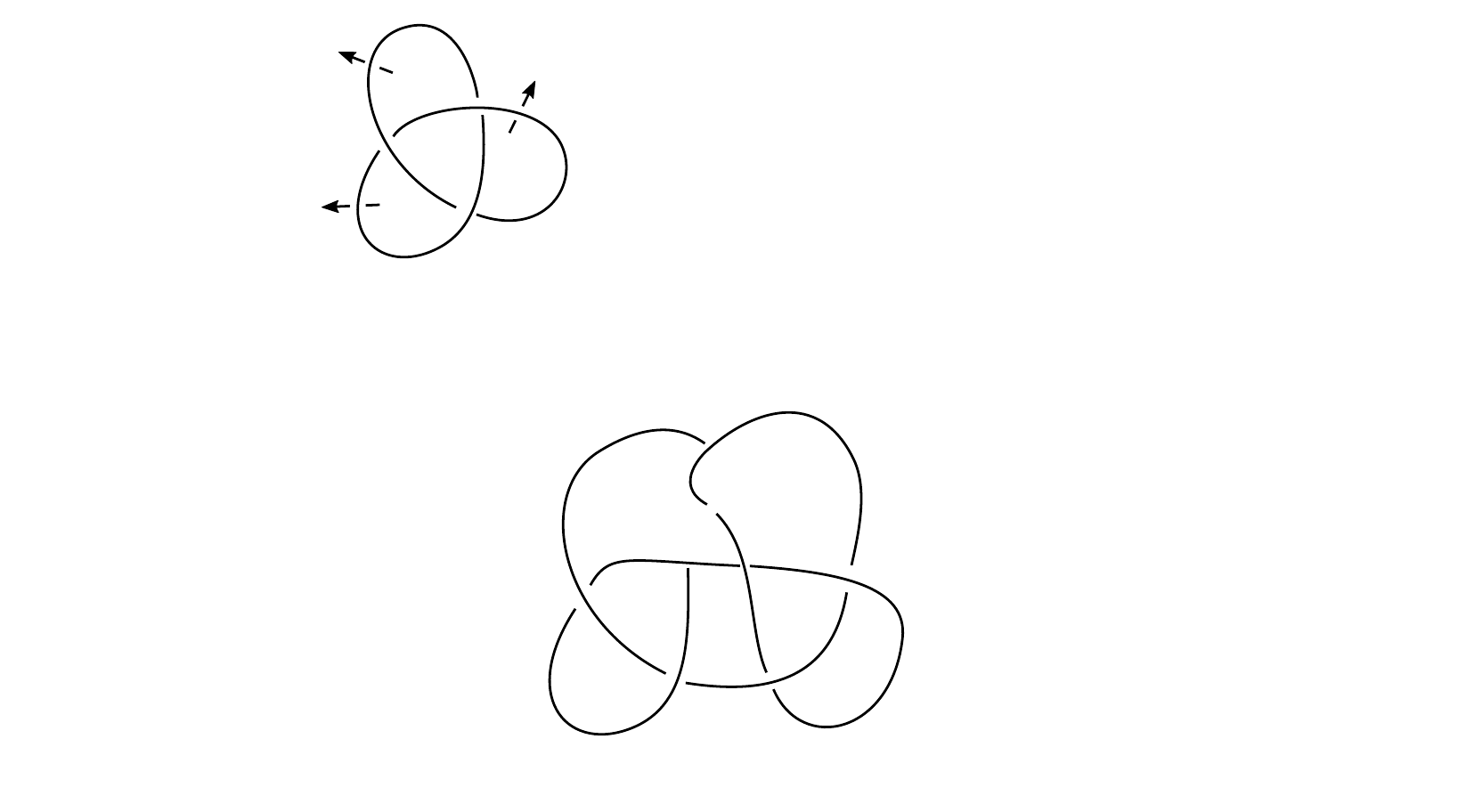}
		\caption{R-related diagrams : the granny knot, $8_{21}$, and $8_{15}$.}
		\label{fig:granny}
	\end{figure}
	Then we have $r= \textrm{Fix}(\rho(m_B))-\textrm{Fix}(\rho'(m_{B'}))=-1-0=-1$ and hence we obtain the irreducible representation $\rho \#_{-1} \rho'$ for $D \# D'$. Now apply Reidemeister second move for the arcs $B$ and $B'$ in $D \# D'$. Since the $\rho \#_{-1} \rho'$ images of $m_B$ and $m_{B'}$ commute, $\rho \#_{-1} \rho'$ is also a representation for a diagram obtained from $D \# D'$ by changing a crossing created by the Reidemeister move. This results in the knots $8_{21}$ and $8_{15}$ depending on the crossing-change as in Figure \ref{fig:granny}.	Hence each of the knots $8_{21}$ and $8_{15}$ has a representation whose image group is the same as the image of $\rho \#_{-1} \rho'$ of the granny knot.

		One can use different arcs $B$ and $B'$ as in Figure \ref{fig:granny2} which results in diagrams of the knots $8_{19}$ and $8_5$. We note that other choices for $B$ and $B'$ does not give a new one.
		\begin{figure}[H]
			\centering
			\scalebox{0.8}{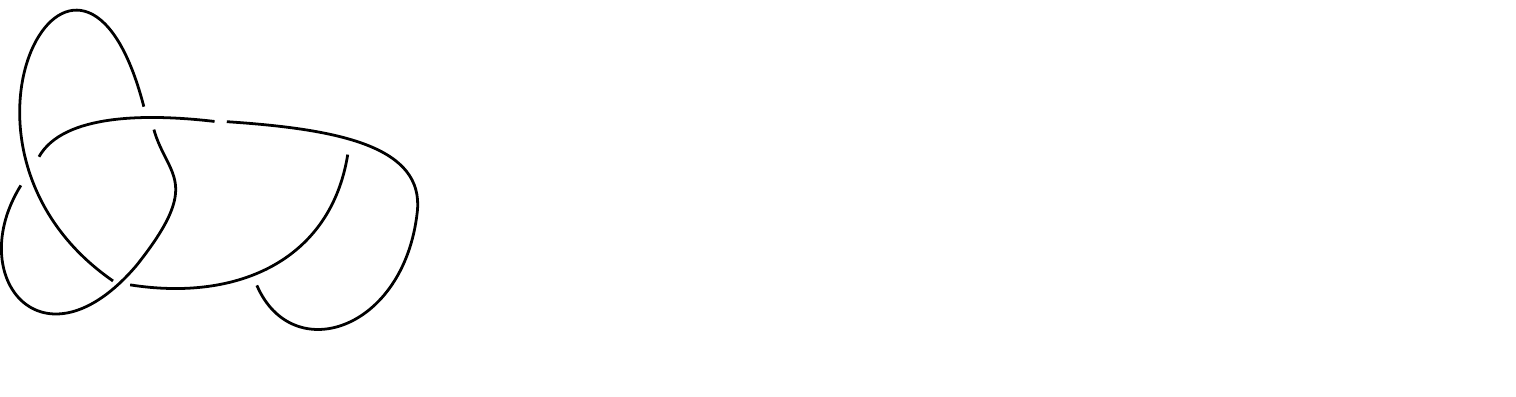}
			\caption{R-related diagrams : the granny knot, $8_{19}$, and $8_{5}$.}
			\label{fig:granny2}
		\end{figure}
	\end{exam}
	\begin{exam}[The square knot] We can apply the same argument to the square knot and obtain the knots $8_{20}$ and $8_{10}$ as in Figure \ref{fig:square}. We check that these knots are only knots obtained from the square knot diagram. Again, we can conclude that each of the knots $8_{20}$ and $8_{10}$ have a representation whose image is the same as a  that of the square knot.
	\begin{figure}[H]
		\centering
		\scalebox{0.8}{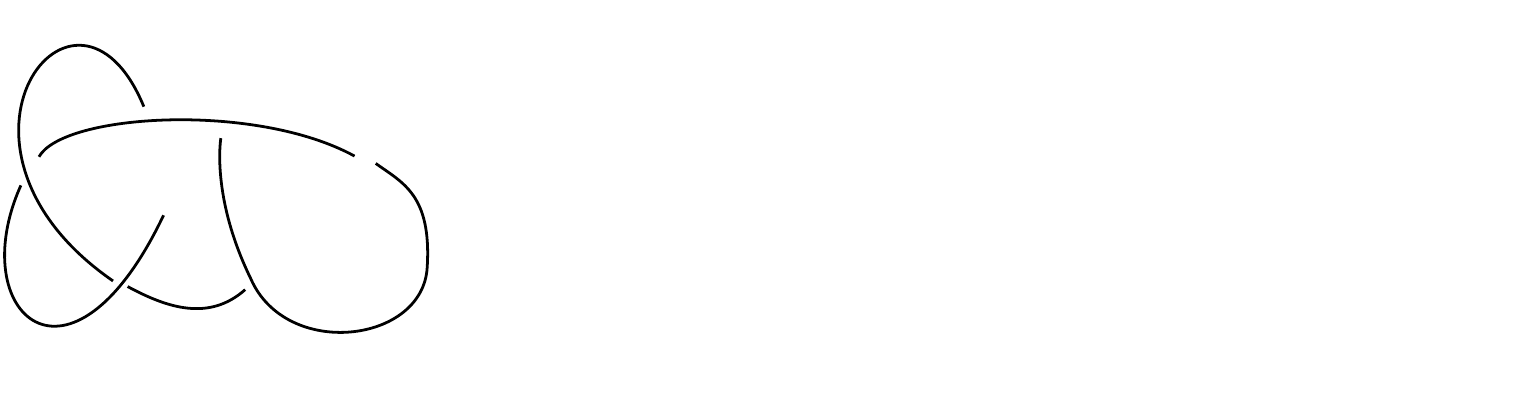}
		\caption{R-related diagrams : the square knot, $8_{20}$, and $8_{10}$.}
		\label{fig:square}
	\end{figure}
	\end{exam}
	The discussions in this section suggest implicitly a hierarchy on the set of knots. If two knots share a R-related digram, in general one is ``smaller" than the other in a certain sense as the discussions in this section indicate, i.e., a representation of a smaller knot essentially appears as a pinched representation of the other bigger knot. This may define a kind of order or a hierarchy on the set of knots This also suggest a strong relation with the knot group epimorphism problem, even though this hierarchy looks weaker than the partial order defined by the knot group epimorphism \cite{KS_2005}. As we saw in the examples of 8 crossing knots, all these knots obtained from granny and square knots by crossing changes are known to have an epimorphism to the trefoil knot, and in fact these are the only such knots with up to 8 crossings. We hope to investigate this ``hierarchy" and the relationship with epimorphism problem more systematically in future papers.
\bibliographystyle{unsrt}

\end{document}